\crefname{enumi}{}{}
\newtheorem{theorem}{Theorem}[section]
\theoremstyle{definition}
\newtheorem{remark}[theorem]{Remark}
\DeclareMathOperator{\inter}{int}
\DeclareMathOperator{\conv}{conv}
\def\K{\mathcal{K}}
\def\R{\mathbb{R}}
\def\Z{\mathbb{Z}}
\def\N{\mathbb{N}}
\DeclareMathOperator{\vol}{vol}
\DeclareMathOperator{\aff}{aff}
\DeclareMathOperator{\lin}{lin}
\DeclareMathOperator{\LE}{G}
\begin{document}

\title[A general discrete Minkowski theorem]{A generalization of the discrete version of Minkowski's fundamental theorem}
\dedicatory{In memory of Hermann Minkowski on the occasion of his 150th birthday}

\author{Bernardo Gonz\'{a}lez Merino}
\address{Zentrum Mathematik, Technische Universit\"at M\"unchen, Boltzmannstr. 3, 85747 Garching bei M\"unchen, Germany}
\email{bg.merino@tum.de}


\author{Matthias Henze}
\address{Institut f\"ur Informatik, Freie Universit\"at Berlin, Takustra\ss e 9, 14195 Berlin, Germany}
\email{matthias.henze@fu-berlin.de}

\thanks{This research was supported through the program "Research in Pairs" by the Mathematisches Forschungsinstitut Oberwolfach in 2014.
The first author was partially supported by MINECO-FEDER project
reference MTM2012-34037, Spain, and by Consejer\'ia de Industria,
Turismo, Empresa e Innovaci\'on de la CARM through Fundaci\'on
S\'eneca, Agencia de Ciencia y Tecnolog\'ia de la Regi\'on de
Murcia, Programa de Formaci\'on Postdoctoral de Personal
Investigador. The second author was supported by the ESF EUROCORES
programme EuroGIGA-VORONOI, (DFG): RO 2338/5-1.}

\subjclass[2010]{Primary 52C07; Secondary 11H06, 52A40, 11P70, 52A20}

\keywords{Convex body, lattice point, Minkowski's fundamental theorem, discrete analog, difference set estimate, successive minimum}

\begin{abstract}
One of the most fruitful results from Minkowski's geometric viewpoint on number theory is his so called $1$st Fundamental Theorem.
It provides an optimal upper bound for the volume of a $0$-symmetric convex body whose only interior lattice point is the origin.
Minkowski also obtained a discrete analog by proving optimal upper bounds on the number of lattice points in the boundary of such convex bodies.
Whereas the volume inequality has been generalized to any number of interior lattice points already by van der Corput in the 1930s, a corresponding result for the discrete case remained to be proven.
Our main contribution is a corresponding optimal relation between the number of boundary and interior lattice points of a $0$-symmetric convex body.
The proof relies on a congruence argument and a difference set estimate from additive combinatorics.
\end{abstract}

\maketitle

\section{Introduction}

A \emph{convex body} in the Euclidean vector space $\R^n$, $n\in\N$, is a compact convex set~$K$ whose set of interior points, denoted by $\inter K$, is nonempty.
The \emph{convex hull} of a subset $S\subset\R^n$ is written as $\conv S$.
We say that a convex body is \emph{strictly convex} if its boundary does not contain a proper line segment, and we write $\K^n_o$ for the family of \emph{$0$-symmetric} convex bodies in~$\R^n$, that is, convex bodies~$K$ with $K=-K$, where $tK=\{tx:x\in K\}$, for~$t\in\R$.
Moreover, we use standard terminology from the theory of convex sets, and we refer the reader to~\cite{gruber2007convex} for all the necessary background.

Motivated by the fundamental inequalities of Minkowski and its various generalizations and extensions, we are interested in the relation of a $0$-symmetric convex body to the lattice~$\Z^n$ consisting of all points in $\R^n$ with only integral coordinates.
A point of $\Z^n$ is shortly called \emph{lattice point} in the sequel, and we denote by $\LE(S)=|S\cap\Z^n|$ the number of lattice points contained in a given set $S\subset\R^n$.
Minkowski proved that the cube $C_n=[-1,1]^n$ has maximal \emph{volume} (Lebesgue measure) among all convex bodies in $\K^n_o$ with the property that the origin is their only interior lattice point (see~\cite[\S30]{minkowski1896geometrie} or~\cite[Sect.~22]{gruber2007convex}).
In symbols,
\begin{align}
\vol(K)&\leq\vol(C_n)=2^n,\quad\textrm{for every }K\in\K^n_o\textrm{ with }\Z^n\cap\inter K=\{0\}.\label{eqn_first_minkowski}
\end{align}
This inequality lies at the heart of Minkowski's geometric viewpoint on number theoretical questions.
Its wide applicability, reaching beyond geometry and number theory, inspired the quest for generalizations and analogous relations ever since (see~\cite[Sect.~22]{gruber2007convex} for classic applications and more background information, and see~\cite[Ch.~3]{taovu2006additive} for its connections to additive combinatorics).
Minkowski moreover obtained a discrete version of this fundamental inequality (see~\cite[pp.~79-80]{minkowski1896geometrie} or~\cite[Thm.~30.2]{gruber2007convex}), saying that the cube also maximizes the total number of lattice points in $0$-symmetric convex bodies~$K$ obeying the above condition.
More precisely,
\begin{align}
\LE(K)&\leq\LE(C_n)=3^n,\quad\textrm{for every }K\in\K^n_o\textrm{ with }\Z^n\cap\inter K=\{0\},\label{eqn_first_mink_disc}
\end{align}
and
\begin{align}
\LE(K)&\leq2^{n+1}-1,\quad\textrm{if }K\textrm{ is moreover strictly convex}.\label{eqn_first_mink_disc_sc}
\end{align}
It has been shown in~\cite{draismamcallisternill2012lattice} that equality holds in~\eqref{eqn_first_mink_disc} if and only if~$K$ is \emph{unimodularly equivalent} to the cube~$C_n$.
That is, there exists an invertible matrix $A\in\Z^{n\times n}$ whose inverse is also an integral matrix and such that $K=AC_n$, where for a matrix $A\in\R^{n\times n}$ and a subset $S\subseteq\R^n$ we write $AS=\{As:s\in S\}$.
A suitable smoothing of the convex hull of $[0,1]^n\cup[-1,0]^n$ shows that the inequality~\eqref{eqn_first_mink_disc_sc} is also best possible.
Besides Minkowski's original monograph~\cite{minkowski1896geometrie}, the book by Gruber \& Lekkerkerker~\cite{gruberlekker1987geometry}, in particular Sections 9.4, 26.2 and the Supplements to Chapter 4, is an excellent reference for the theory that developed out of these results.
More recent developments are covered in~\cite{gruber2007convex}.

Another way of saying that $K\in\K^n_o$ contains only the origin as an interior lattice point is that its \emph{first successive minimum}
\[\lambda_1(K)=\min\left\{\lambda>0:\lambda K\cap\Z^n\neq\{0\}\right\}\]
is at least one.
Based on this concept, Betke, Henk \& Wills~\cite{betkehenkwills1993successive} proved an extension of Minkowski's inequalities:
\begin{align}
\LE(K)&\leq\left\lfloor\frac2{\lambda_1(K)}+1\right\rfloor^n,\quad\textrm{for every }K\in\K^n_o,\label{eqn_BHW}
\end{align}
and
\begin{align}
\LE(K)&\leq2\left\lceil\frac2{\lambda_1(K)}\right\rceil^n-1,\quad\textrm{if }K\textrm{ is moreover strictly convex}.\label{eqn_BHW_sc}
\end{align}
Here, the floor function $\lfloor x\rfloor$ and the ceiling function $\lceil x\rceil$ of a real number~$x$ denote, as usual, the largest integer smaller than or equal to $x$, and the smallest integer bigger than or equal to $x$, respectively.
Clearly,~\eqref{eqn_first_mink_disc} and~\eqref{eqn_first_mink_disc_sc} are special cases of~\eqref{eqn_BHW} and~\eqref{eqn_BHW_sc}, respectively.
Moreover, using $\lambda_1(tK)=\frac1{t}\lambda_1(K)$, for any $t>0$, and $\vol(K)=\lim_{t\to\infty}\LE(tK)/t^n$, a limit argument shows that~\eqref{eqn_first_minkowski} is a consequence of~\eqref{eqn_BHW}.

Another perspective on extending Minkowski's volume inequality has already been taken by van der Corput~\cite{vdcorput1936verallg}, who showed that
\begin{align}
\vol(K)&\leq 2^{n-1}\left(\LE(\inter K)+1\right),\quad\textrm{for every }K\in\K^n_o.\label{eqn_vandercorput}
\end{align}
Equality is attained for the stretched cube $C_{n-1}\times[-\ell,\ell]$, where $\ell\in\N$.
In this work, we are interested in a discrete version of van der Corput's result, that is, an upper bound on~$\LE(K)$ in terms of the number of interior lattice points in~$K\in\K^n_o$.
In order to see that such a bound exists, we observe that for any $0$-symmetric convex body~$K$, and any of its lattice points~$z\in K\cap\Z^n$, the open line segment $(-z,z)$ contains at most $\LE(\inter K)$ lattice points.
Therefore, $|[0,z)\cap\Z^n|\leq(\LE(\inter K)+1)/2$ and thus
\begin{align}
\frac2{\lambda_1(K)}\leq\LE(\inter K)+1,\quad\textrm{for every }K\in\K^n_o.\label{eqn_lambda1_Gint}
\end{align}
Combining this inequality with~\eqref{eqn_BHW} gives $\LE(K)\leq\left(\LE(\inter K)+2\right)^n$.
A connection to a conjecture of Betke, Henk \& Wills~\cite[Conj.~2.1]{betkehenkwills1993successive} that generalizes~\eqref{eqn_BHW} provides a hint on how a best possible such inequality may look like.
These authors claim that
\[\LE(K)\leq\prod_{i=1}^n\left\lfloor\frac2{\lambda_i(K)}+1\right\rfloor,\quad\textrm{for every }K\in\K^n_o,\]
where $\lambda_i(K)=\min\{\lambda>0:\lambda K\cap\Z^n\textrm{ contains }i\textrm{ linearly independent points}\}$ is the $i$th successive minimum of~$K$ (see~\cite{malikiosis2012adiscrete} for the state of the art regarding this conjecture).
If this were true and we moreover assume that $\lambda_2(K)\geq1$, that is, the interior lattice points of $K$ are collinear, then by~\eqref{eqn_lambda1_Gint}, we have \[\LE(K)\leq3^{n-1}\left\lfloor\frac2{\lambda_1(K)}+1\right\rfloor\leq3^{n-1}\left(\LE(\inter K)+2\right).\]
As our main result, we prove that this bound holds unconditionally, and thus obtain an exact discrete version of van der Corput's inequality which includes~\eqref{eqn_first_mink_disc} as special case.

\begin{theorem}\label{thm_main_discrete_mink}
Let $K\in\K^n_o$. We have
\begin{align}
 \LE(K)&\leq3^{n-1}\left(\LE(\inter K)+2\right),\label{eqn_main_theorem}
\end{align}
and equality is attained if and only if $K$ is unimodularly equivalent to the parallelepiped $C_{n-1}\times[-\ell,\ell]$, for some $\ell\in\N$.
\end{theorem}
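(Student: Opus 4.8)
The plan is to reduce everything to a statement about lattice point counts modulo a well-chosen lattice, and then invoke a difference set estimate from additive combinatorics. Write $m=\LE(\inter K)$ and let $L=(-z,z)\cap\Z^n$ be a maximal chain of collinear interior lattice points; we may assume after a unimodular transformation that $z=\ell e_n$ for some $\ell\in\N$, so $L=\{-(\ell-1)e_n,\dots,(\ell-1)e_n\}$ and $m=2(\ell-1)+1=2\ell-1$, hence $m+2=2\ell+1$. The key object is the sublattice $\Lambda=\Z^{n-1}\times (2\ell)\Z$ of index $2\ell$ in $\Z^n$: I would show that each residue class of $\Z^n/\Lambda$ that meets $K$ contributes at most $3^{n-1}$ points of $K$, and that the number of nonempty classes is controlled. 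Since $K\cap\Z^n$ has at most $2\ell$ such classes in the $e_n$-direction, the crude bound $\LE(K)\le 3^{n-1}\cdot 2\ell$ is already close to the target $3^{n-1}(2\ell+1)$, but to get the $+1$ one must gain in the classes that do \emph{not} contain a lattice point of $\inter K$ on the central axis.

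For the per-class bound, fix a residue $v\in\Z^n/\Lambda$ and consider the set $A_v=\{x\in\R^{n-1}:(x,t)\in K\cap\Z^n\text{ for some }(x,t)\equiv v\}$, the projection of the lattice points of $K$ in that class onto the first $n-1$ coordinates. By $0$-symmetry and convexity of $K$, the set $A_v-A_v$ is contained in the projection of $\frac12(K-K)=K$ onto $\R^{n-1}$, call it $\bar K$, which is a $0$-symmetric convex body in $\R^{n-1}$ whose only interior lattice point is the origin (this uses $\lambda_2(K)\ge 1$, equivalently that the interior lattice points of $K$ all lie on $\R e_n$, which is exactly the generic case — the collinear case is where equality in \eqref{eqn_lambda1_Gint} forces us). Hence by Minkowski's discrete theorem \eqref{eqn_first_mink_disc} applied in dimension $n-1$, $\LE(\bar K)\le 3^{n-1}$, and by a Freiman-type difference set estimate $|A_v|\le|A_v-A_v|\le\LE(\bar K)\le 3^{n-1}$. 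Summing over the at most $m+2=2\ell+1$ admissible residue classes — here one must argue that only $2\ell+1$, not $2\ell$, classes can be populated only by accounting carefully for which $e_n$-fibers are nonempty, using that $K$ is bounded so its $e_n$-extent spans at most $2\ell+1$ consecutive integers while the class structure has period $2\ell$ — yields \eqref{eqn_main_theorem}. The main obstacle is making this counting of classes tight: a naive pigeonhole gives the wrong constant, and one needs the congruence argument (two lattice points of $K$ in the same $\Lambda$-class and same fiber direction would produce an interior lattice point off the axis, contradicting collinearity) to show that the fibers of $K$ interact with $\Lambda$ in exactly the extremal way.

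For the equality characterization, one traces back the inequalities: equality forces $\LE(\bar K)=3^{n-1}$ in every nonempty class, so by the Draisma--McAllister--Nill rigidity result $\bar K$ is unimodularly equivalent to $C_{n-1}$; simultaneously the difference set estimate $|A_v|=|A_v-A_v|$ is tight, forcing each $A_v$ to be a full translate of the lattice points of $\bar K$ (a coset-type equality case of the Freiman bound); and the fiber count being exactly $2\ell+1$ forces the $e_n$-extent of $K$ to be a full interval of length $2\ell$, i.e. $[-\ell,\ell]$ in the normalized coordinates, with no ``wobble'' between fibers. Assembling these, $K\cap\Z^n$ is exactly the lattice point set of $C_{n-1}\times[-\ell,\ell]$, and since both are convex bodies with the same (many) lattice points and $C_{n-1}\times[-\ell,\ell]$ is lattice-point-maximal, $K$ must equal it up to unimodular equivalence. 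I expect the delicate point here to be ruling out convex bodies that are not product-like but happen to contain the same lattice points — this should follow because equality in \eqref{eqn_first_mink_disc} is attained only by the cube, pinning down each horizontal slice, and convexity then propagates the product structure through the $e_n$-direction.
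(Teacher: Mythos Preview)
Your plan has two genuine gaps.

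\textbf{The non-collinear case is missing entirely.} Your setup writes $m=\LE(\inter K)=2\ell-1$ where $\ell$ counts the interior lattice points on a single line through the origin; this equation already presupposes that \emph{all} interior lattice points are collinear. You then explicitly invoke $\lambda_2(K)\ge 1$ to argue that the projection $\bar K$ has no nonzero interior lattice point. But nothing in the hypotheses forces collinearity: a slightly shrunk copy of $2C_n$ has $3^n$ interior lattice points, all spread out. The paper treats the non-collinear case separately (its Case~2), and this is precisely where the difference-set estimate of \cref{thm_number_vectors} is used: if $\LE(K)>3^nk$ for the appropriate $k$, some residue class modulo~$3$ contains $k+1$ points $u_0,\dots,u_k$, and the Freiman--Heppes--Uhrin bound $|U-U|\ge 3|U|-3$ (with the odd-cardinality refinement) applied to $U=\{u_0,\dots,u_k\}$ produces $\ge 3k+1$ interior lattice points of the form $(u_i-u_j)/3$, a contradiction. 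Your plan never invokes this mechanism; the phrase ``Freiman-type difference set estimate'' appears, but you use it only to say $|A_v|\le|A_v-A_v|$, which is the trivial direction.

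\textbf{Even in the collinear case, the key containment fails.} You claim $A_v-A_v\subseteq\bar K\cap\Z^{n-1}$ where $\bar K$ is the projection of $K$ onto $\R^{n-1}$. Take the equality case $K=C_{n-1}\times[-\ell,\ell]$, so $\bar K=C_{n-1}$. In the residue class $t\equiv\ell\pmod{2\ell}$, both $(1,\dots,1,\ell)$ and $(-1,\dots,-1,\ell)$ lie in $K$, so $(2,\dots,2)\in A_v-A_v$, but $(2,\dots,2)\notin C_{n-1}$. What convexity and symmetry actually give is $\tfrac12(x_1-x_2)\in\bar K$, i.e.\ $A_v-A_v\subseteq 2\bar K$, which is too weak. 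Relatedly, your per-class bound of $3^{n-1}$ is false: in this same example the class $t\equiv\ell$ contains $2\cdot3^{n-1}$ lattice points of $K$ (coming from both $t=\ell$ and $t=-\ell$), while the other $2\ell-1$ classes contain $3^{n-1}$ each, totalling $(2\ell+1)3^{n-1}$. So the extremal distribution is not uniform across your $\Lambda$-classes, and your counting argument cannot be repaired by simply adjusting the number of classes.

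The paper's collinear argument (its Case~1) avoids both issues by working modulo~$3$ rather than modulo~$2\ell$: any two lattice points of $K$ in the same residue class mod~$3$ yield an interior lattice point $(v-w)/3$, which by collinearity must lie on $L$; this forces each residue class to lie on a single line parallel to $L$, and each such line meets $K$ in at most $\LE(\inter K)+2$ points. Grouping the $3^n$ classes into $3^{n-1}$ triples (one per parallel lattice line) gives the bound directly.
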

Note that an inequality of Scott~\cite{scott1976on} implies the bound~\eqref{eqn_main_theorem} in the case $n=2$.
Our proof of \cref{thm_main_discrete_mink} is based on two main ingredients.
The first is an application of an elegant congruence argument that lies behind many pertinent results in the geometry of numbers (cf.~\cite{betkehenkwills1993successive,rabinowitz1989a,vdcorput1936verallg} or~\cite[Sect.~30/31]{gruber2007convex}).
We say that two lattice points $x,y\in\Z^n$ are \emph{congruent modulo $m\in\Z$}, if $x-y\in m\Z^n$.
Observe that the points of $\Z^n$ are partitioned into precisely~$m^n$ congruence classes, also often called \emph{residue classes}.
In order to illustrate the method, we recall Minkowski's proof of~\eqref{eqn_first_mink_disc}:
Assume that for some $K\in\K^n_o$, we have $\LE(K)>3^n$.
Then there are $x,y\in K\cap\Z^n$, $x\neq y$, that are congruent modulo~$3$.
By symmetry and convexity of $K$, this shows that $(x-y)/3$ is a non-zero interior lattice point of $K$, contradicting the assumptions on the body.
The second ingredient is an estimate on the size of difference sets of non-collinear finite point sets to which the next section is devoted.
The details for \cref{thm_main_discrete_mink} are then carried out in~\cref{sect_proof_disc_Mink}.

In the case of strictly convex bodies, the congruence argument alone leads to a bound that generalizes~\eqref{eqn_first_mink_disc_sc} as follows:
\begin{align}
\LE(K)\leq 2^n(\LE(\inter K)+1)-1,\quad\textrm{for every strictly convex }K\in\K^n_o.\label{eqn_osymm_theorem_sc}
\end{align}
However, this inequality can be improved by roughly a factor of $2/3$.
Moreover, the condition of $0$-symmetry on the involved convex body can be removed.
This is a curious phenomenon that deviates from the general case as a comparison of~\eqref{eqn_main_theorem} and the inequality~\eqref{eqn_pikhurko_lp} below shows.

\begin{theorem}\label{thm_main_theorem_sc}
Let $K\in\K^n$ be strictly convex. We have
\begin{align}
\LE(K)\leq 2(2^{n-1}-1)\left\lceil\frac23(\LE(\inter K)+1)\right\rceil+\LE(\inter K)+2.\label{eqn_main_theorem_sc}
\end{align}
\end{theorem}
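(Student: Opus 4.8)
The plan is to run the congruence argument modulo $2$, but in a refined way that also exploits the lattice structure along the ``long'' direction. First I would fix a strictly convex $K\in\K^n$ and set $k=\LE(\inter K)$. Partition $K\cap\Z^n$ into the $2^n$ residue classes modulo $2$. If two distinct lattice points $x,y\in K\cap\Z^n$ lie in the same class, then $(x+y)/2\in\Z^n$, and by strict convexity this midpoint lies in $\inter K$ unless $x,y\in\bd K$ with the whole segment $[x,y]$ on the boundary, which strict convexity forbids for $x\neq y$; hence every such midpoint is an \emph{interior} lattice point. This is the mechanism that turns a large residue class into many interior lattice points, and it is the reason no $0$-symmetry is needed.

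The key refinement is to count how many points a single residue class can contribute. Suppose a residue class modulo $2$ contains $t$ lattice points $z_1,\dots,z_t$ of $K$. The $\binom{t}{2}$ midpoints $(z_i+z_j)/2$ are interior lattice points, but they need not be distinct; however, one can order the $z_i$ along any chord direction so that the midpoints $(z_1+z_i)/2$, $i=2,\dots,t$, together with $(z_2+z_t)/2$ are genuinely distinct interior lattice points, giving roughly $t$ of them — more precisely, a class of size $t\ge 2$ forces at least $t-1$ distinct interior lattice points, and a class of size $t\ge 3$ forces at least $t$ of them when one accounts for a second, ``off-end'' midpoint. The counting that I would carry out is: of the $2^n$ residue classes, the class of $0$ is special, and among the remaining $2^n-1$ classes, a careful bookkeeping (charging each class's surplus over a small constant to distinct interior lattice points, of which there are only $k$) yields that the total is at most $2(2^{n-1}-1)\lceil\tfrac23(k+1)\rceil+k+2$. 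The factor $\lceil\tfrac23(k+1)\rceil$ is exactly the maximum size of a residue class compatible with producing no more than $k$ interior lattice points via the improved midpoint count; the $\tfrac23$ saving over the naive $2^n(k+1)-1$ comes from the fact that a class of size $t$ eats up on the order of $\tfrac32 t$ units of the ``interior budget'' rather than just $\tfrac12 t$.

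I would organize the proof as follows. Step 1: reduce to the statement that each nonzero residue class $C$ modulo $2$ satisfies $|C\cap K|\le \lceil\tfrac23(k+1)\rceil$, together with a global accounting of the overlaps of the interior lattice points generated by different classes — or, more robustly, prove directly that $\sum_{C\neq 0}\bigl(|C\cap K|-1\bigr)^{+}$ and the within-class second midpoints inject into $\inter K\cap\Z^n$ in a way that caps the sum. Step 2: establish the single-class bound by the chord-ordering midpoint argument above, using strict convexity to guarantee interiority of all midpoints and to guarantee that the first-point midpoints are pairwise distinct. Step 3: pair up opposite residue classes — note $C$ and $-C$ (which coincide here since we are modulo $2$, so instead pair classes via a fixed-point-free involution on the $2^n-1$ nonzero classes to get the factor $2(2^{n-1}-1)$), and observe that within a pair the generated interior points can be forced disjoint, sharpening the count. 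Step 4: add back the class of $0$, which contributes at most $k+2$ points (the $k$ interior points, the origin-type representative, and one boundary point of strict-convex type), giving the stated bound.

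The hard part will be Step 2 and Step 3 together: making the midpoint count tight enough to produce the constant $\tfrac23$ rather than $1$, and doing the global accounting so that the interior lattice points charged by different residue classes (and by the two halves of a class via the two distinct midpoints) do not collide — this is where strict convexity and a clever ordering of the points along a common generic chord direction must be used, and it is the only place the argument is genuinely delicate rather than a bookkeeping of residue classes. I expect the cleanest route is to choose a linear functional that is injective on $K\cap\Z^n$, sort all lattice points of $K$ by its value, and run the midpoint argument simultaneously across all classes, so that the injectivity of the ``$z\mapsto$ (midpoint with the class-minimum)'' map is transparent.
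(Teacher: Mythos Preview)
Your approach is genuinely different from the paper's, and the sketch does not close. The paper does \emph{not} use a congruence argument here at all: it sets $S=\Z^n\setminus\inter K$, encloses $K$ in a maximal $S$-free polyhedron~$L$, observes that by strict convexity every boundary lattice point of $K$ sits on its own facet of~$L$, and then bounds the number of facets via $f(S)\le h(S)\le c(n,\LE(\inter K))$, importing the Aliev--Bassett--De Loera--Louveaux bound $c(n,k)\le 2(2^{n-1}-1)\lceil\tfrac23(k+1)\rceil+2$. That is where the constant $\tfrac23$ comes from; it is an external input, not something produced by midpoint combinatorics.

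Your plan has several concrete gaps. First, the arithmetic does not parse: there are $2^n-1$ ``nonzero'' residue classes modulo~$2$, an odd number, so no fixed-point-free involution exists to manufacture the factor $2(2^{n-1}-1)$. Second, the per-class claim that ``a class of size $t$ eats up on the order of $\tfrac32 t$ units of the interior budget'' is asserted, not proved; the midpoint/sumset bound $|(P+P)/2|\ge 2|P|-1$ gives a different slope, and you never show how to globally charge the midpoints from distinct classes to \emph{distinct} interior lattice points (they can and do collide, e.g.\ $(0,0),(2,0)$ and $(1,-1),(1,1)$ share the midpoint $(1,0)$). Third, the language of ``the class of $0$'', ``origin-type representative'', and pairing $C$ with $-C$ is symmetry talk; without $0$-symmetry there is no distinguished class and no reason Step~4's ``at most $k+2$'' should hold. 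The paper itself remarks (Remark~(ii) after the proof) that the congruence route, combined with the difference-set estimate, yields a bound of the same order only for $0$-symmetric~$K$; extending it to arbitrary strictly convex~$K$ is exactly what your proposal does not supply. If you want to push the elementary route, you would need a replacement for the Helly-number machinery that controls midpoint collisions across classes without symmetry, and nothing in Steps~2--3 does that.
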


For $K\in\K^n$ with $\LE(\inter K)=1$, also this bound reduces to Minkowski's inequality~\eqref{eqn_first_mink_disc_sc}.
The proof of \cref{thm_main_theorem_sc} is based on a connection to Helly numbers of families of $S$-convex sets.
We elaborate on this in~\cref{sect_helly_number_connection}, where we also give the details for the bound~\eqref{eqn_osymm_theorem_sc}.

We end this introduction by shortly discussing the state of the problem of obtaining sharp estimates analogous to~\eqref{eqn_vandercorput} and our~\cref{thm_main_discrete_mink} for the case of not necessarily $0$-symmetric convex bodies.
Such investigations already started in the 1980s and culminated in the work of Pikhurko~\cite{pikhurko2001lattice}, who proved
\begin{align}
\vol(P)&\leq(8n)^n 15^{n2^{2n+1}}\LE(\inter P),\label{eqn_pikhurko}
\end{align}
and
\begin{align}
\LE(P)&\leq n!(8n)^n 15^{n2^{2n+1}}\LE(\inter P)+n,\label{eqn_pikhurko_lp}
\end{align}
whenever $\Z^n\cap\inter P\neq\emptyset$ and $P$ is a \emph{lattice polytope} in $\R^n$, that is, the convex hull of finitely many lattice points.
Although the minimal factor in front of $\LE(\inter P)$ admitting inequalities of this type is known to be doubly exponential in~$n$, the above bounds are assumed to be far from tight.
The determination of the exact bound is only solved for~\eqref{eqn_pikhurko} in the case of lattice simplices with exactly one interior lattice point~\cite{averkovkruempelnill2014largest} (cf.~\cite{pikhurko2001lattice} for further information on the history of the problem).

\section{The equality case in a planar difference set estimate}\label{sect_difference_set_estimate}

In this section, we discuss a combinatorial result on the minimal number of difference vectors generated by a \emph{non-collinear} point set, by which we mean a point set that is not contained in a line.
To this end, for $U,V\subset\R^n$, we write $U+V=\{u+v:u\in U,v\in V\}$.
A set of the form $\{u,u+s,\ldots,u+(k-1)s\}$, for some $u,s\in\R^n$ and $k\in\N$, is called an \emph{arithmetic progression}, and the length of the vector~$s$ is its \emph{spacing}.
It is easy to see that for any nonempty finite sets $U,V\subset\R^n$, we have
\begin{align}
|U+V|\geq|U|+|V|-1.\label{eqn_sumset_estimate}
\end{align}
Equality is attained if and only if $U$ and $V$ are arithmetic progressions with the same spacing.
The case $V=-U$ of this inequality is of particular interest.
Freiman, Heppes~\& Uhrin~\cite{freimanheppesuhrin1990alower} showed that if we assume that $U$ has affine dimension $d$, then one can improve the above bound to
\begin{align}
|U-U|&\geq(d+1)|U|-\binom{d+1}2.\label{eqn_FHU}
\end{align}
Note that the authors of~\cite{freimanheppesuhrin1990alower} apply this inequality to sharpen a classic result of Blichfeldt on the number of lattice points in the difference set of an arbitrary Lebesgue-measurable set.
For $d=1,2$, the estimate above cannot be further improved, but it is conjectured that for any $d\geq3$ there is a better bound.
In fact, revising a conjecture of Freiman, Stanchescu~\cite{stanchescu2001an} claims that, for every $d\geq2$, the maximal factor in front of~$|U|$ in an inequality of the type~\eqref{eqn_FHU} is given by $2(d-1)+1/(d-1)$ and proves this for the case $d=3$.
Such difference set estimates embed in the currently very active field of additive combinatorics, where people study more generally the structure of subsets~$U$ of some abelian group whose sum-sets or difference-sets $U\pm U$ have either very small or very large cardinality.
For instance, generalizing an earlier result by Freiman, Ruzsa found an optimal lower bound on $|U+V|$, for two given subsets $U$ and $V$, which includes~\eqref{eqn_FHU} as a special case (see~\cite{ruzsa2006additive} for a survey on this and related problems).
The interested reader may also consult the book of Tao \& Vu~\cite{taovu2006additive} that covers the recent developments and their various applications in many branches of mathematics.

For our purposes, we need to investigate the case $d=2$ of the inequality~\eqref{eqn_FHU} more closely.
We have seen that Freiman, Heppes \& Uhrin obtained the optimal lower bound on the size of the difference set in this case.
Moreover, for the case that $|U|$ is even, Stanchescu~\cite{stanchescu1998on} characterized the point sets~$U$ attaining equality.
However, in order to be able to prove \cref{thm_main_discrete_mink}, we also need to characterize the point sets of odd size with minimal value of $|U-U|$.
To the best of our knowledge this has not been worked out before, and thus we give the complete proof of all three statements for the readers convenience.
Before we can state the result, we need to introduce a notion of a generalized arithmetic progression.
We say that a point set $U\subset\R^n$ is an \emph{arithmetic progression of type $(k,l)$} if there exists an anchor point $u\in\R^n$ and two linearly independent vectors $s,t\in\R^n$ such that $U=U'\cup(U'+t)\cup\ldots\cup(U'+(l-1)t)$, where $U'=\{u,u+s,\ldots,u+(k-1)s\}$.
Moreover, we say that a point set $U\subset\R^n$ is an \emph{incomplete arithmetic progression of type~$(k,l)$}, if there exists an $x\in\R^n$ such that $U\cup\{x\}$ is an arithmetic progression of type~$(k,l)$ and $x$ is a vertex of $\conv\{U\cup\{x\}\}$.

\begin{theorem}\label{thm_number_vectors}
For any set $U\subset\R^n$, $n\geq2$, of $k\in\N$ non-collinear points, we have
\[|U-U|\geq\begin{cases}3k-3&\textrm{if }k\textrm{ is even,}\\ 3k-2&\textrm{if }k\textrm{ is odd.}\end{cases}\]
Equality holds for even $k$ if and only if $U$ is an arithmetic progression of type $(k/2,2)$, and for odd $k$ if and only if $U$ is either an incomplete arithmetic progression of type $(\lceil k/2 \rceil,2)$ or an arithmetic progression of type $(3,3)$.
\end{theorem}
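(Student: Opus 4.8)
The plan is to reduce to the plane, take the lower bound $3k-3$ from \eqref{eqn_FHU}, and extract everything else — the improvement to $3k-2$ for odd $k$ and the classification of the equality cases — from an induction on $k$ driven by a projection argument in which the equality conditions of the one‑dimensional estimate \eqref{eqn_sumset_estimate} are tracked at every step. First one replaces $\R^n$ by the affine hull of $U$, which is a plane because $U$ is non-collinear, and computes $U-U$ there; so assume $U\subset\R^2$ with $|U|=k$. Then \eqref{eqn_FHU} with $d=2$ gives $|U-U|\ge 3k-3$, which settles the even inequality, and it will turn out that equality in it forces $k$ to be even, whence $|U-U|\ge 3k-2$ whenever $k$ is odd.

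For the equality statements I would induct on $k$, distinguishing whether $\conv U$ is centrally symmetric. Suppose first that it is not. Then $\conv U$ has an edge $E$ whose outer normal $n$ has no parallel partner, so the face of $\conv U$ maximizing $\langle -n,\cdot\rangle$ is a single vertex $v\notin E$. Passing to $U':=U\setminus\{v\}$ (a separate, easy argument handles the case where $U'$ is collinear and then gives $|U-U|\ge 4k-5$), every vector $v-u$ with $u\in E\cap U$ attains the maximum of $\langle -n,\cdot\rangle$ over $U-U$, while the maximum of $\langle -n,\cdot\rangle$ over $U'-U'$ is strictly smaller; together with the vectors $u-v$ this exhibits $2\,|E\cap U|\ge 4$ members of $(U-U)\setminus(U'-U')$. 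Hence $|U-U|\ge|U'-U'|+4\ge 3(k-1)-3+4=3k-2$, so $3k-3$ is impossible in this case; and $|U-U|=3k-2$ forces $|U'-U'|=3(k-1)-3$, so by the inductive classification $U'$ is an arithmetic progression of type $((k-1)/2,2)$ (in particular $k$ is odd), exactly four vectors are gained, and examining where $v$ can lie without producing a fifth forces $v$ to prolong one of the two progressions by a single step, i.e.\ $U$ is an incomplete arithmetic progression of type $(\lceil k/2\rceil,2)$.

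If $\conv U$ \emph{is} centrally symmetric I switch to a slicing argument. Fix an edge $L$, let $\ell$ be the linear functional vanishing on $L$ and positive on $U\setminus L$, and let $U_1=U\cap L,U_2,\dots,U_m$ be the level sets of $U$ over $0=x_1<\dots<x_m$, so that $U_m=U\cap L'$ for the edge $L'$ parallel to $L$. Writing $V=U\setminus U_1$, the pieces $U_1-U_1$ (at level $0$), $V-U_1$ (at positive levels) and $U_1-V$ (at negative levels) are pairwise disjoint, of sizes at least $2|U_1|-1$ and $|U_1|+|V|-1$ (twice) by \eqref{eqn_sumset_estimate}, while the remaining contribution of $V-V$ is estimated by induction on $k$. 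Summing and reading off the equality cases of \eqref{eqn_sumset_estimate} yields $|U-U|\ge 3k-3$, with equality forcing all level sets to be translates of a single arithmetic progression with step along $L$ and $m=2$, $|U_1|=|U_2|=k/2$ — so $U$ is an arithmetic progression of type $(k/2,2)$ and $k$ is even. In the odd case the same bookkeeping leaves only finitely many genuinely two‑dimensional possibilities, which a short direct check identifies as the $3\times 3$ grid, an arithmetic progression of type $(3,3)$. Conversely one verifies by direct computation that each of the listed sets attains the claimed value.

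The delicate part is the centrally symmetric case: with many level sets the naive disjointness estimate is far too lossy, so the induction must be arranged so that deleting a whole level set $U_1$ — or, when $\conv U$ is a parallelogram, invoking its central symmetry — recovers the sharp bound; and isolating every genuinely two‑dimensional equality configuration, with the $3\times 3$ grid as the crucial one, requires a careful, though elementary, case analysis. This is precisely what makes the odd case richer than the even one.
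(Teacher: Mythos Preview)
Your approach is organized differently from the paper's: rather than splitting by central symmetry of $\conv U$, the paper picks an edge line $L$ and the parallel supporting line $L'$, sets $U'=U\setminus(U\cap L')$, and inducts on $k-|U\cap L|-|U\cap L'|$; the extra difference vectors are always those of the form $\pm(v-w)$ with $v\in U\cap L$, $w\in U\cap L'$, which sit at the extreme level and are therefore disjoint from $U'-U'$. Your scheme could in principle be made to work, but as written it has two genuine gaps.

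First, the reduction to the plane is misstated: ``non-collinear'' only says $\dim\aff U\ge 2$, not $=2$, so passing to the affine hull does not land you in $\R^2$. You need either a generic projection (as the paper does) or an appeal to \eqref{eqn_FHU} with $d\ge 3$ to rule out higher-dimensional equality cases; neither is in your text.

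Second, the centrally symmetric case does not go through as you describe it. Your three pieces $U_1-U_1$, $V-U_1$, $U_1-V$ are indeed pairwise disjoint, but they are \emph{not} disjoint from $V-V$: already at level $0$ every $U_i-U_i$ with $i\ge 2$ sits inside both $V-V$ and the level of $U_1-U_1$, and positive levels $x_i$ can coincide with differences $x_p-x_q$. Hence ``summing'' these bounds with the inductive bound on $|V-V|$ overcounts, and you cannot conclude $|U-U|\ge 3k-3$ this way. The fix is to isolate only the \emph{extreme} levels $\pm x_m$, i.e.\ the vectors $\pm(U_m-U_1)$, which genuinely lie outside $V-V$; this gives $|U-U|\ge|V-V|+2(|U_1|+|U_m|-1)$ and, with $|U_1|=|U_m|$ by central symmetry, the desired $3k-3$. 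But that is precisely the paper's mechanism, and once you adopt it the case split on central symmetry becomes unnecessary. Your final paragraph already concedes that the naive estimate is ``far too lossy'' and that the induction ``must be arranged'' differently, yet no concrete rearrangement is supplied; in particular the identification of the $(3,3)$ grid in the odd case and the exclusion of all other configurations is asserted (``a short direct check'') rather than carried out.
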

\begin{proof}
First of all, we show that it suffices to prove the theorem for subsets~$U$ of $\R^2$.
Since $U$ is finite, we can always find a two-dimensional linear subspace~$S$ of $\R^n$ such that the orthogonal projection $U_S$ of $U$ onto $S$ is a non-collinear point set with the same cardinality as~$U$.
Indeed, any two-dimensional linear subspace such that no direction vector $v-w$, for $v,w\in U$, is orthogonal to it, will do.
Clearly, $|U-U|\geq|U_S-U_S|$ and hence it suffices to prove the bound for~$U_S$.
If~$U$ has affine dimension at least three, taking a little more care we can find such a subspace~$S$ with the additional property that two difference vectors of $U-U$ are projected onto the same difference vector in~$U_S-U_S$.
We leave the details of this construction to the reader.\footnote{An alternative way to reduce the proof to two-dimensional point sets~$U$ is via the inequality~\eqref{eqn_FHU}. For any $d\geq3$ the implied bound is strictly greater than what is claimed in the theorem.}
Therefore, any set~$U$ attaining equality in any of the claimed bounds has affine dimension at most two.

For the remainder of the proof we assume that $U\subset\R^2$, and we closely follow the arguments in~\cite[Sect.~3]{stanchescu1998on} with adjustments and extensions wherever necessary.
Let $L$ be a supporting line of an edge of $\conv U$ and let $L'$ be the supporting line parallel to~$L$ and on the other side of $\conv U$.
Since~$U$ is non-collinear, we have $L\neq L'$.
Without loss of generality, we let $\ell=|U\cap L|\geq|U\cap L'|=m\geq1$, and by construction, we have~$\ell\geq2$.

\begin{figure}[t]
\includegraphics[scale=.85]{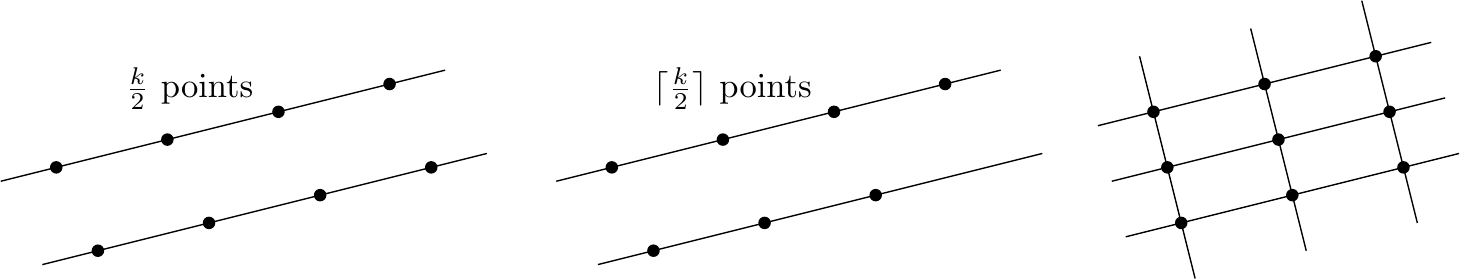}
\caption{The equality cases in \cref{thm_number_vectors}, for $k$ even (left) and $k$ odd (middle and right).}
\label{fig_equality_cases}
\end{figure}

We now prove the claimed inequalities together with their characterization of equality by induction on $k-\ell-m$.
The base case is $k-\ell-m=0$ which means that the point set~$U$ is contained in the two parallel lines~$L$ and~$L'$.
Using twice the inequality~\eqref{eqn_sumset_estimate} yields
\begin{align*}
|U-U|&\geq|(U\cap L)-(U\cap L)|+\big|\pm\big((U\cap L)-(U\cap L')\big)\big|\\
&\geq2\ell-1+2(\ell+m-1)=3k+(\ell-m)-3.
\end{align*}
It follows that $|U-U|\geq3k-3$, for arbitrary~$k$, and $|U-U|\geq3k-2$, if $k=\ell+m$ is odd.
Equality is attained for even~$k=|U|$, if and only if $\ell=m$, the points $U\cap L$ form an arithmetic progression and the point set $U\cap L'$ is a translate of $U\cap L$.
That is, $U$ is an arithmetic progression of type $(k/2,2)$.
If~$k$ is odd, equality holds if and only if $\ell=m+1$, the points $U\cap L$ form an arithmetic progression and every difference vector generated by $U\cap L'$ can be generated by $U\cap L$.
It is not very difficult to see that the latter condition holds if and only if $U\cap L'$ is a translate of $(U\cap L)\setminus\{p\}$, for some endpoint $p\in U\cap L$.
Hence,~$U$ is an incomplete arithmetic progression of type $(\lceil k/2 \rceil,2)$.

To carry out the induction step, we now assume $k-\ell-m>0$, that is, $U\neq(U\cap L)\cup(U\cap L')$, and we write $U'=U\setminus(U\cap L')$.
As before, there are at least $2(\ell+m-1)$ vectors of the form $\pm(v-w)$, where $v\in U\cap L$ and $w\in U\cap L'$.
This number is at least $4m-2\geq3m$, for $m\geq2$, and at least $2\ell\geq4>3m$, for $m=1$.
Moreover, we find exactly $3m$ such difference vectors if and only if $\ell=m=2$, and the two points in $U\cap L$ have the same spacing as the two points in $U\cap L'$.
By construction, the orthogonal projection of these difference vectors $\pm(v-w)$ onto the line orthogonal to~$L$ is different from the projection of a difference vector generated by any two points of~$U'$.
Thus, apart from vectors of $U'-U'$, the set $U-U$ contains at least~$3m$ further vectors.
Since the points in $U'$ are non-collinear, we inductively get that
\begin{align}
|U-U|&\geq|U'-U'|+3m\geq3(k-m)-3+3m=3k-3.\label{eqn_comb_bound}
\end{align}
If $k=|U|$ is odd, then $3k-3$ is even and so the bound can be improved to $|U-U|\geq3k-2$, as $|U-U|$ is an odd number.

In order to finish the characterization of equality, we need to distinguish some cases.

\medskip
\noindent{\it Case 1: $k$ is even and $|U-U|=3k-3$.}

This holds precisely, if we have $|U'-U'|=3(k-m)-3$ and there are exactly $3m$ difference vectors of the form $\pm(v-w)$, where $v\in U\cap L$ and $w\in U\cap L'$.
By induction hypothesis, the first condition implies that $k-m$ is even and that~$U'$ is an arithmetic progression of type $((k-m)/2,2)$.
Moreover, from the second condition we get $\ell=m=2$ and the spacing of the two points $U\cap L$ is the same as that of the points $U\cap L'$.
Let~$S$ and $S'$ be two parallel lines that contain the set~$U'$, and such that, say, one of the points of~$U\cap L$ is in~$S$ and the other point in~$S'$.
Without loss of generality we assume that these lines are horizontal, with~$S$ being the lower one.
Since~$L$ supports an edge of $\conv U$ and $|U\cap L|=2$, it supports one of the edges of $\conv U'$ that contain exactly two points of~$U$.

We claim that one of the two points of~$U\cap L'$ is in~$S$ and the other one in~$S'$.
In order to see this, let us assume that $w\in U\cap L'$ does not lie on any of the lines~$S$ and~$S'$.
Since~$U$ is not completely contained in the lines~$L$ and~$L'$, there is another line $L''$ parallel to $L$ that is different from these two and contains two points from~$U'$.
If~$w$ lies below~$S$, we let $z$ be the point in $U'\cap L''\cap S'$, and otherwise we let $z$ be the point in $U'\cap L''\cap S$.
But then the difference vector $w-z$ can neither be generated by $U'$ nor by points from $U\cap L$ and $U\cap L'$, contradicting the equality assumption.
Therefore, $w\in S\cup S'$, say $w\in S$, and moreover, it is easy to see that the distance from the closest point in $U'\cap S$ to $w$ must be the same as the (equal) distance of any neighboring points in $U'\cap S$.
Hence,~$U$ is an arithmetic progression of type~$(k/2,2)$.

\medskip
\noindent{\it Case 2: $k$ is odd and $|U-U|=3k-2$.}

In view of~\eqref{eqn_comb_bound}, there are two options.

\medskip
\noindent{\it Case 2.1: $|U'-U'|=3(k-m)-3$ and there are exactly $3m+1$ difference vectors of the form $\pm(v-w)$, where $v\in U\cap L$ and $w\in U\cap L'$.}

The first condition implies by induction that $k-m$ is even, thus $m$ is odd, and that $U'$ is an arithmetic progression of type $((k-m)/2,2)$.
We define the horizontal lines $S$ and $S'$ as in Case 1.
In general, there are at least $2(\ell+m-1)$ difference vectors $\pm(v-w)$, which is exactly $3m+1$ if either $\ell=m=3$, or $\ell=2$ and $m=1$.
Since $L$ supports an edge of $\conv U'$, we see that in the first case either $L=S$ or $L=S'$, and thus $k-m=6$.
Therefore,~$U$ is a set of nine points equally distributed on three parallel lines and a similar argument as in Case 1 above shows that $U$ must be an arithmetic progression of type $(3,3)$ in order to avoid additional difference vectors besides those generated by two points in $U'$ or one point each from $U\cap L$ and $U\cap L'$.
In the second case, that is, $\ell=2$ and $m=1$, the line~$L$ intersects $U'$ in exactly two points and thus supports a short edge of $\conv U'$.
Again, we argue similarly as in Case 1 and we get that the point $U\cap L'$ must lie on one of the horizontal lines $S$ and $S'$, say it lies on~$S$, having the same distance from the closest point in $U'\cap S$ as the (equal) distance of any neighboring points in $U'\cap S$.
Hence, $U$ is an incomplete arithmetic progression of type $(\lceil k/2 \rceil,2)$.

\medskip
\noindent{\it Case 2.2: $|U'-U'|=3(k-m)-2$ and there are exactly $3m$ difference vectors of the form $\pm(v-w)$, where $v\in U\cap L$ and $w\in U\cap L'$.}

The first condition implies by induction that $k-m$ is odd and that $U'$ is either an incomplete arithmetic progression of type $(\lceil(k-m)/2\rceil,2)$ or an arithmetic progression of type $(3,3)$.
From the second condition we infer that $\ell=m=2$, and since $L$ supports an edge of $\conv U'$ and contains exactly two points of $U$, the set $U'$ cannot be an arithmetic progression of type $(3,3)$.
Hence, writing $k'=\lceil(k-m)/2\rceil$, we have that $U'=V\cup(V'+t)$, where $V=\{u,u+s,\ldots,u+(k'-1)s\}$ and $V'=V\setminus\{u+(k'-1)s\}$, for suitable $u,s,t\in\R^2$.
Moreover, it is no restriction to assume that $L$ intersects~$U'$ in the points $u,u+t$, and we let $U\cap L'=\{v,w\}$.
Again by the same argumentation as in Case 1, we see that $v$ and $w$ must be contained in the lines spanned by $V$ and $V'$, respectively, and moreover $v=u+k's$ and $w=u+t+k's$, or vice versa.
But now we find that the pair of difference vectors $\pm(w-(u+s))=\pm(t+(k'-1)s)$ can neither be generated by~$U'$ nor by points from $U\cap L$ and $U\cap L'$, contradicting that $U$ is a point set attaining equality.
Eventually, this shows that the current case cannot occur, and thus finishes our proof.
\end{proof}

\section{Proof of the general discrete Minkowski theorem}\label{sect_proof_disc_Mink}

For the following proof we need the notion of an \emph{$i$-dimensional lattice plane} by which we mean any affine subspace of~$\R^n$ of the form $z+\lin\{z_1,\ldots,z_i\}$, for some linearly independent $z_1,\ldots,z_i\in\Z^n$ and some~$z\in\Z^n$.

\begin{proof}[Proof of~\cref{thm_main_discrete_mink}]
The proof splits up into two main cases.

\medskip
\noindent{\it Case 1: The interior lattice points of~$K$ are contained in a line $L$.}

By the $0$-symmetry of~$K$, the number of interior lattice points in
$K$ is odd, hence $\LE(\inter K)=2t-1$, for some $t\in\N$. Any two
points $v,w\in K\cap\Z^n$ from the same residue class modulo $3$
give rise to a lattice point $(v-w)/3$. Since~$K$ is convex and
$0$-symmetric, this lattice point belongs to the interior of~$K$ and
hence lies on~$L$. This implies that all lattice points of $K$
belonging to the same residue class modulo $3$ are contained in a
line parallel to $L$. Each lattice line contains lattice points from
exactly three different residue classes modulo~$3$. Now, let $L'$ be
a parallel lattice line to~$L$ and let $R_1$, $R_2$, and $R_3$, be
the set of lattice points of $K$ that belong, respectively, to the
three residue classes in $L'$. If one of these sets, say $R_1$,
contains two different lattice points $v,w\in K\cap L'$, then the
segment $[v,w]$ contains at least two lattice points in its relative interior
and in fact one from each set $R_2$ and $R_3$. Therefore, $R_1\cup
R_2\cup R_3\subset L'$. Now, $L'\cap K$ can contain no more than
$2t+1$ lattice points, as we otherwise would get more than $2t-1$
interior lattice points of $K$ by its $0$-symmetry. Hence $|R_1\cup
R_2\cup R_3|\leq 2t+1$. If, on the contrary, $|R_i|\leq 1$, for all
$i=1,2,3$, then clearly $|R_1\cup R_2\cup R_3|\leq 3\leq 2t+1$.

There are $3^{n-1}$ triples of residue classes with points in the
same parallel line to~$L$. By counting the lattice points in $K$ by
containment in these triples, we get
$\LE(K)\leq3^{n-1}\left(2t+1\right)=3^{n-1}\left(\LE(\inter
K)+2\right)$ as claimed.

\medskip
\noindent{\it Case 2: The interior lattice points of~$K$ are non-collinear.}

Let $k\in\N$ be such that $3k-2\leq \LE(\inter K)\leq 3k$. Since
$3k-2\leq\LE(\inter K)$, assuming that $\LE(K)\leq 3^nk$
implies~\eqref{eqn_main_theorem}. Therefore, we suppose that $\LE(K)>3^nk$. By this
assumption, there exists a residue class modulo $3$ that contains at
least $k+1$ different elements $u_0,u_1,\ldots,u_k\in K\cap\Z^n$.
The idea of the proof consists of distinguishing two
subcases, depending on the distribution of these lattice points.
In each subcase, we prove that either \eqref{eqn_main_theorem} is true
or that $K$ contains at least $3k+1$ interior lattice points.
The latter contradicts the choice of $k$ and thus proves that the
respective subcase cannot occur.

\medskip
\noindent{\it Case 2.1: The points $u_0,\ldots,u_k$ are collinear.}

We assume without loss of generality that the points
$u_0,\ldots,u_k$ lie in this sequence on the line~$L$. Since they
belong to the same residue class modulo~$3$, there are at least two
lattice points between any pair $u_i$ and $u_{i+1}$ on~$L$, and
hence the line segment $[u_0,u_k]$ contains at least $3k+1$ lattice
points. Let~$L_0$ be the line parallel to~$L$ passing through the
origin. By $0$-symmetry of~$K$, we see that the central slice $K\cap
L_0$ of~$K$ contains at least $3k-1$ interior lattice points.
Additionally, we find at least one interior lattice point of $K$
outside $L_0$, since the interior lattice points of $K$ are assumed
to be non-collinear. The $0$-symmetry of $K$ implies that the
opposite of this point is an interior lattice point as well, and
thus $\LE(\inter K)\geq3k+1$.

\medskip
\noindent{\it Case 2.2: The points $u_0,\ldots,u_k$ are non-collinear.}

By \cref{thm_number_vectors} there are, depending on the parity of $k$, at least $3k$ or $3k+1$ difference vectors of the form $u_i-u_j$, $i,j\in\{0,1,\ldots,k\}$.
Since the~$u_i$ belong to the same residue class modulo~$3$, the points $(u_i-u_j)/3$ are interior lattice points of~$K$.
Hence, if $k$ is even, we obtain $\LE(\inter K)\geq 3k+1$.
If, on the contrary, $k$ is odd and \cref{thm_number_vectors} gives us exactly $3k$ difference vectors, then its equality characterization shows that $U=\{u_0,\ldots,u_k\}$ is an arithmetic progression of type $((k+1)/2,2)$.

In view of $\LE(\inter K)\geq3k$ (thus having indeed $\LE(\inter K)=3k$),
the desired estimate~\eqref{eqn_main_theorem} holds if $K$ contains not too many more lattice points than $3^nk$.
In fact, if $\LE(K)\leq3^nk+2\cdot3^{n-1}-1$, then $\LE(K)\leq3^{n-1}\LE(\inter K)+2\cdot3^{n-1}-1<3^{n-1}\left(\LE(\inter K)+2\right)$.
Hence, we may assume that $\LE(K)>3^nk+2\cdot3^{n-1}-1$.
Moreover, no residue class contains $k+2$ lattice points of~$K$, since otherwise we get $\LE(\inter K)\geq3k+3$ by the argument in the previous paragraph.
If $c$ denotes the number of residue classes with precisely $k+1$ elements in $K$, then we have $\LE(K)\leq c(k+1)+(3^n-c)k=3^nk+c$ and hence $c\geq2\cdot3^{n-1}$.
Let $U_i$, $i=1,\ldots,c$, be the set of lattice points of~$K$ lying in the $i$th such residue class.

By the above considerations this implies $|U_i-U_i|=3k$, for all $i=1,\ldots,c$, which means that all the $U_i$, $i=1,\ldots,c$, are arithmetic progressions of type $((k+1)/2,2)$.
We claim that under this condition the sets~$U_1,\ldots,U_c$ need to be translates of each other.

To this end, we write $U_i=\{u_0^i,\ldots,u_k^i\}$, for
$i=1,\ldots,c$. More precisely, there are anchor points
$a^i \in \R^n$ and pairs of linearly independent vectors
$s^i, t^i \in \R^n$ such that $u_l^i=a^i+lt^i$ and
$u_{(k+1)/2+l}^i=a^i+s^i+lt^i$, for $l=0,\ldots,(k-1)/2$.
From this explicit description one derives $U_i-U_i=\left\{0,\pm
t^i,\ldots,\pm\frac{k-1}2t^i\right\}\cup\left(\pm s^i+\left\{0,\pm
t^i,\ldots,\pm\frac{k-1}2t^i\right\}\right)$, that is, the
difference vectors in $U_i-U_i$ are equally distributed on three
parallel lines. In order for all of them to generate the same set of
interior lattice points of~$K$, we need to have $U_i-U_i=U_j-U_j$,
for $i,j=1,\ldots,c$, which readily implies $s^i=s^j$ and $t^i=t^j$,
for $i,j=1,\ldots,c$, and hence our claim.

The assumptions that $k$ is odd and $u_0,\ldots,u_k$ are
non-collinear imply that $k\geq 3$. Thus, for every $U_i$, its
affine hull $S_i=\aff U_i$ is a lattice plane. Let~$S$ be the linear
$2$-dimensional subspace parallel to the planes $S_i$,
$i=1,\ldots,c$. Let $w_1,\ldots,w_9\in\Z^n$ be distinct
representatives of the nine residue classes present in $S$.
Moreover, let $r_1,\ldots,r_{3^{n-2}}\in\Z^n$ be such that
$\Z^n=\bigcup_{k,\ell}(r_k+w_\ell+3\Z^n)$ is the partition of $\Z^n$
by the residue classes modulo $3$. Let $r_{k_i}+w_{\ell_i}$ be the
representative of the residue class corresponding to $U_i$. Observe
that if $r_{k_i}=r_{k_j}$, for some $i,j\in\{1,\ldots,c\}$, then
$S_i=S_j$, because $S_i$ actually contains at least one lattice
point of~$K$ from each of the nine residue classes that are present
in~$S_i$. Since $c\geq2\cdot3^{n-1}$, the pigeonhole principle
implies that without loss of generality $r_{k_1}=\ldots=r_{k_6}$,
for $k_1,\ldots,k_6\in\{1,\ldots,3^{n-2}\}$. Therefore,
$S_1=\ldots=S_6$.

Remember that any segment whose endpoints are
different points of $U_i$ contains another two points from
different residue classes modulo 3.
Remember as well that $U_1,\ldots,U_6$ are translates of each other.
The points $a^1+jt^1+ls^1$, $j,l=0,1/3,2/3$, are lattice points in $K$,
each in a different residue class. Since six of these points are
contained in $U_1 \cup \ldots \cup U_6$, either there exists
$j\in\{0,1/3,2/3\}$, such that $a^1+jt^1+ls^1$, $l=1/3,2/3$, belong to $U_2 \cup\ldots \cup U_6$,
or $a^1+jt^1$, $j=0,1/3,2/3$, belong to $U_1 \cup\ldots \cup U_6$.
In the first case, assuming without loss of generality
that $u^2_r=a^1+jt^1+(1/3)s^1$ and $u^3_s=a^1+jt^1+(2/3)s^1$, we find that
$u^2_{r\pm(k+1)/2}=a^1+u^2_0\pm s^1$ and $u^3_{s\pm(k+1)/2}=a^1+u^3_0\pm s^1$
(where each $\pm$ is either $+$ or $-$ depending on $r,s$
taking values in $\{0,1,\ldots,(k-1)/2\}$ or $\{(k+1)/2,(k+3)/2,\ldots,k\}$, respectively), together with
$a^1+jt^1+s^1=(1-j)(a^1+s^1)+j(a^1+t^1+s^1)$ and $a^1+jt^1=(1-j)(a^1+0)+j(a^1+t^1)$, belong to $K$
and they all lie in the same line.
In the second case, again without loss of generality,
the points $u^j_i$, $j=1,2,3$, $i=0,\dots,(k-1)/2$, are
lattice points of $K$ lying in a line.
Therefore, we have shown that there is a lattice line either parallel to
$L_1=\lin\{s^1\}$ containing at least $6$ points of
$U_1\cup\ldots\cup U_6$, or parallel to
$L_2=\lin\{t^1\}$ containing at least
$3(k-1)/2+3$ lattice points of $U_1\cup\ldots\cup U_6$.
By $0$-symmetry of~$K$, either the central slice $C_1$ of~$K$ parallel
to $L_1$ contains at least~$5$ interior lattice points, or the
central slice $C_2$ of~$K$ parallel to $L_2$ contains at least
$3(k-1)/2+1$ interior lattice points. On the other hand, only $3$
lattice points in $C_1$ and only $k$ lattice points in $C_2$ are
derived from $U_1-U_1$. As $3(k-1)/2+1>k$ for every $k\geq3$, this
contradicts the fact that $\LE(\inter K)=3k$ under
the condition $\LE(K)>3^nk+2\cdot3^{n-1}-1$.

Summarizing our investigations, we have seen
that either~\eqref{eqn_main_theorem}
holds true or $\LE(\inter K)\geq3k+1$. As
explained in the beginning of Case 2, this finishes the proof of the
desired inequality~\eqref{eqn_main_theorem}.

\medskip
\noindent{\it Characterization of the equality case.}

For the characterization of the equality case, we consider
$K\in\K^n_o$ such that $\LE(K)=3^{n-1}\left(\LE(\inter K)+2\right)$.

\medskip
\noindent{\it Case 1: The interior lattice points of $K$ are non-collinear.}

The previous paragraph shows that in order for equality to hold there needs to be some odd~$k$ such that $\LE(\inter K)=3k-2$, and in particular $\LE(K)=3^nk$.
Moreover, $k\geq3$ due to the assumed non-collinearity.
If there exists a residue class modulo~$3$ that contains at least $k+1$ lattice points of~$K$, then we have seen that $\LE(\inter K)\geq3k+1$, clearly a contradiction.
This means that each of the~$3^n$ residue classes contains exactly $k$ lattice points of~$K$.
Let $U$ be the set of the~$k$ lattice points of~$K$ belonging to the residue class~$3\Z^n$.

\medskip
\noindent{\it Case 1.1: The points in $U$ are non-collinear.}

From the $0$-symmetry of~$K$, we see that $U$ must be a $0$-symmetric point set.
By \cref{thm_number_vectors}, we have $|U-U|\geq3k-2$ and in fact we must have equality as $\LE(\inter K)=3k-2$.
Thus, the set~$U$ either is an incomplete arithmetic progression of type $(\lceil k/2 \rceil,2)$, or an arithmetic progression of type~$(3,3)$ (cf.~\cref{fig_equality_cases}).
The first situation cannot occur, since no incomplete arithmetic progression of type $(\lceil k/2 \rceil,2)$ can be translated as to be $0$-symmetric.
In the latter situation, we have $k=9$, and the origin is the central point in $U$.
The lattice points in the relative interior of $\conv U$ are interior lattice points of $K$, but it may happen that $\conv U$ contains exactly $3k-2=25$ relative interior lattice points.
In this case, all of the eight other residue classes with a point in the lattice plane $\lin U$ have less than~$k$ lattice points in $\conv U$.
By assumption, fixing one of these residue classes~$R$, there must be some lattice point in $K$, contained in the class $R$, which does not lie in $\conv U$.
Then, either one of the eight points $U\setminus\{0\}$ is an interior lattice point of~$K$, or the class $R$ generates an interior lattice point that is not contained in $\lin U$.
In both cases, we get more than the assumed $3k-2$ interior lattice points in~$K$ and thus a contradiction.

\medskip
\noindent{\it Case 1.2: The points in $U$ are contained in a line~$L$.}

Let $V$ be the set of the~$k$ lattice points of~$K$ contained in one of the other two residue classes that have points in~$L$.
Since $|U|=k$, the line~$L$ contains at least $k-1$ lattice points of~$V$.
Indeed, it contains exactly $k-1$ such points, since otherwise $K\cap L$ contains at least $3k-2$ interior lattice points and hence the interior lattice points of~$K$ would be collinear contradicting the assumption.
This means, that there is one point of $V$ outside ~$L$.
Since $k-1\geq2$, there are at least three pairwise linearly independent vectors in the difference set $V-V$.
Hence, there must also be three pairwise linearly independent interior lattice points of~$K$.
But this is a contradiction, because $K\cap L$ contains $3k-4$ interior lattice points of~$K$ and thus there can only be one pair of opposite interior lattice points outside ~$L$.

Concluding Case 1, we have proved that there is no convex body $K\in\K^n_o$ with non-collinear interior lattice points and that attains equality in~\eqref{eqn_main_theorem}.

\medskip
\noindent{\it Case 2: The interior lattice points of $K$ are collinear.}

In the case $\LE(\inter K)=1$ equality in~\eqref{eqn_main_theorem} has been characterized in~\cite{draismamcallisternill2012lattice}.
In fact, there is a unimodular transformation~$A$ such that $AK=C_n$.
Therefore, we assume that $\ell=\LE(\inter K)\geq 3$, and we let~$L$ be the line containing the interior lattice points of~$K$.
The lattice points of $K$ are partitioned into~$3^n$ sets each of which containing only lattice points of a fixed residue class modulo~$3$.
Let these sets be labeled~$R_{ij}$, for $i=1,\dots,3^{n-1}$ and $j=1,2,3$, such that for every lattice line~$L'$ parallel to $L$ there is some $i\in\{1,\ldots,3^{n-1}\}$ so that $L'$ contains only points of $R_{ij}$, for $j=1,2,3$.
As observed in the first paragraph of the proof, each $R_{ij}$ is contained in a line parallel to~$L$.
Moreover, if $|R_{ij}|\geq 2$, then between two consecutive points in $R_{ij}$ there are another two lattice points corresponding to the other two residue classes present on the line containing~$R_{ij}$.
Hence, $R_{i1}, R_{i2}$ and $R_{i3}$ have to be contained in the same line, and by the $0$-symmetry of~$K$ this line contains at most $\ell+2=\LE(\inter K)+2$ lattice points of~$K$ in total.
Note, that it cannot happen that there is an index $i\in\{1,\ldots,3^{n-1}\}$ such that $|R_{ij}|\leq 1$, for all $j=1,2,3$, because then
\[\LE(K)=\sum_{k=1}^{3^{n-1}}\sum_{j=1}^3 |R_{kj}|\leq\sum_{k=1}^{3^{n-1}-1}(\ell+2)+3<3^{n-1}(\ell+2)=\LE(K),\]
which is a contradiction.
Thus, for every $i\in\{1,\ldots,3^{n-1}\}$ there is a line $L_i$ parallel to~$L$ containing all the points in $R_{ij}$, $j=1,2,3$, and this line must contain exactly $\ell+2$ lattice points of~$K$ in order for $K$ to attain equality in~\eqref{eqn_main_theorem}.
Since~$\ell$ is odd, for any $i=1,\ldots,3^{n-1}$, we can write $L_i=z_i+L$, where $z_i\in K\cap\Z^n$ is the midpoint of $L_i\cap K\cap \Z^n$.
Without loss of generality, we let $z_1=0$, and hence $L_1=L$.

We now claim that $H=\lin\{z_i:\,i=1,\dots,3^{n-1}\}$ is
$(n-1)$-dimensional. As observed before, $|R_{ij}|\geq 1$, for all
$i=1,\dots,3^{n-1}$ and $j=1,2,3$. Therefore,
$K\cap\Z^n=\cup_{i,j}R_{ij}$ contains a lattice point from each of
the $3^n$ residue classes and thus is full-dimensional. Hence the
orthogonal projection of $K\cap\Z^n$ onto the lattice hyperplane
orthogonal to $L$ is an $(n-1)$-dimensional set. This implies that
$H$ has at least $n-1$ dimensions. Furthermore, the subspace $H$
cannot be full-dimensional. To see this, we let $v\in
L\cap\Z^n\setminus\{0\}$ be of minimal length and we relabel
the~$z_i$ such that $z_2,\ldots,z_n$ are linearly independent. Since
every $L_i$ contains exactly~$\ell+2$ lattice points of~$K$, the
lattice points $((\ell+1)/2)v\pm z_i$, $i=1,\ldots,n$, are contained
in~$K$. Moreover, $((\ell+1)/2)v$ is contained in the relative
interior of the $(n-1)$-dimensional crosspolytope
$\conv\{((\ell+1)/2)v\pm z_i:\,i=2,\ldots,n\}$, but it is not an
interior lattice point of~$K$.
In order to prove the claim, let $H'=\lin\{z_2,\ldots,z_n\}$ and
assume, for the sake of contradiction,
that $z_k\notin H$, for some $k\in\{n+1,\ldots,3^{n-1}\}$. Hence,
$z_k=u+\lambda v$, for some $u\in H'$. Possibly replacing $v$ by $-v$,
we assume that~$\lambda>0$. Then, $((\ell+1)/2)v+z_k$ is strictly
separated from the origin by the hyperplane $((\ell+1)/2)v+H'$. As a
consequence, we have
\[((\ell+1)/2)v\in\inter\conv\{((\ell+1)/2)v\pm z_i:i\in\{1,\ldots,n\}\cup\{k\}\}\subset\inter K,\]
which is a contradiction. So we have $H'=H$, proving the claim.

Since $K$ is convex, $Q:=\conv\bigcup_{i,j}R_{ij}\subseteq K$.
Moreover,~$Q$ is a prism with basis parallel to $H$ and length $(\ell+1)\|v\|$ in the direction $L=\lin\{v\}$.
Observe that none of the lines $z_i+L$, for $i>1$, intersects~$K$ nor~$Q$ in the interior, because otherwise we would find interior lattice points of $K$ outside~$L$.
In order to finish the proof, let us consider $Q'=\conv\{z_i\pm v:i=1,\ldots,3^{n-1}\}$.
We clearly have $\LE(\inter Q')=1$, $\LE(Q')=3^n$, and $v$ lies on the relative interior
of the facet $\conv\{v+ z_i:i\in\{1,\ldots,3^{n-1}\}\}$ of $Q'$,
and thus by the already mentioned equality characterization
in~\cite{draismamcallisternill2012lattice} there exists a unimodular transformation $A$ such that $AQ'=C_n$.
The lattice point $Av$ lies on the relative interior of a facet of $AQ'$, and
since~$A$ maps parallel lines to parallel lines, up to a suitable linear permutation
of the components, we have $AQ=C_{n-1}\times[-(\ell+1)/2,(\ell+1)/2]$.

What is left to show is that~$K=Q$.
Assume for contradiction that this is not the case, and let $p\in K\backslash Q$.
There exists a hyperplane parallel to a facet~$F$ of~$Q$ that strictly separates~$p$ from~$Q$.
Since either $e_i$ or $-e_i$ is the center of a given facet of~$C_n$, where $e_i$ denotes the $i$th coordinate unit vector, the center~$w$ of the facet~$F$ is one of the points $\pm A^{-1}e_1,\ldots,\pm A^{-1}e_{n-1}$, or $\pm A^{-1}((\ell+1)/2)e_n$.
Therefore, $\conv(\{p\}\cup Q)\subset K$ contains~$w$ in its interior, which contradicts either the fact that~$L$ contains exactly~$\ell$ interior lattice points of~$K$, or the assumption that the interior lattice points of~$K$ are collinear.
Hence, in fact $K=Q$ and we are done.
\end{proof}

\section{A connection to Helly numbers of families of \texorpdfstring{$S$}{S}-convex sets}\label{sect_helly_number_connection}

In this section, we are concerned with how much the bound in \cref{thm_main_discrete_mink} can be improved under the assumption that the involved convex body is strictly convex.
We start with the inequality~\eqref{eqn_osymm_theorem_sc} whose proof is a direct extension of Minkowski's original ideas that led to~\eqref{eqn_first_mink_disc_sc} (cf.~\cite[p.~79/80]{minkowski1896geometrie}).

\begin{proof}[Proof of \eqref{eqn_osymm_theorem_sc}]
Assume that $\LE(K)\geq2^{n+1}k$ for some $k\in\N$.
Then, $K$ contains, besides the origin, at least $2^nk$ pairs of lattice points $x,-x$.
If at least~$k$ of these pairs are congruent to $0$ modulo $2$, then the points $\pm\frac12x$ are interior lattice points of $K$ and hence $\LE(\inter K)\geq2k+1$.
In the case that there are at most $k-1$ of these pairs that are congruent to $0$ modulo $2$, the pigeon hole principle provides us with a different congruence class modulo~$2$ containing at least $k+1$ points from these pairs.
Let these points be $v_1,\ldots,v_{k+1}\in K\cap\Z^n$, and let $H$ be a hyperplane
supporting $\conv\{v_1,\ldots,v_{k+1}\}$ exactly in a vertex, say $v_1$.
Observe that the vectors $v_i-v_1$, for $i=2,\dots,k+1$, are pairwise
different and point to the same halfspace determined by $H$.
Therefore, even the vectors $\pm(v_i-v_1)$, for $i=2,\dots,k+1$,
are pairwise different.
Since $K$ is strictly convex, the points $\pm(v_i-v_1)/2$, $i=2,\ldots,k+1$, together with $0$,
provide $2k+1$ interior lattice points of $K$ and hence $\LE(\inter K)\geq 2k+1$.

In summary, assuming that $\LE(\inter K)=2k-1$ implies the desired bound $\LE(K)\leq2^{n+1}k-1=2^n\left(\LE(\inter K)+1\right)-1$.
\end{proof}

As we claim in \cref{thm_main_theorem_sc}, this result can be further improved by roughly a factor of $2/3$ on the right hand side and additionally the restriction to $0$-symmetric convex bodies can be removed.
In order to see this, we need to introduce some notions and concepts regarding Helly numbers of families of certain sets.
For background information on these topics we refer the reader to the works that are cited below in this section.

Let $S$ be an arbitrary non-empty subset of $\R^n$. A closed convex set
$L\subset\R^n$ is said to be \emph{maximal $S$-free} if the interior
of~$L$ is disjoint from~$S$, and~$L$ is inclusion-maximal with
respect to this property. Any subset of $\R^n$ is called
\emph{$S$-convex} if it can be written as the intersection of~$S$
and a convex set. The \emph{Helly number} $h(S)$ of the family of
$S$-convex sets is the smallest natural number~$h$ with the
following property:
\begin{align}
&\textrm{For every family }C_1,\ldots,C_m\textrm{ of }S\textrm{-convex
sets, such that}\label{eqn_hellynumber}\\
&\cap_{j=1}^mC_j=\emptyset,\textrm{ there exist }i_1,\ldots,i_h\textrm{ satisfying }\cap_{j=1}^hC_{i_j}=\emptyset.\nonumber
\end{align}
The classical theorem of Helly asserts that $h(\R^n)=n+1$ while
Doignon~\cite{doignon1973convexity} showed that $h(\Z^n)=2^n$.

Now, the \emph{facet complexity $f(S)$ of maximal $S$-free sets} is defined as the smallest natural number~$f$ such that every $n$-dimensional
maximal $S$-free set is a polyhedron with at most~$f$ facets.
Averkov~\cite[Thm.~2.1]{averkov2013Onmaximal} proved that
\begin{align}
f(S)\leq h(S).\label{eqn_f_h_ineq}
\end{align}
Furthermore, we need a Helly-type result for which we introduce the
constant $c(n,k)$ for every $n\in\N$ and $k\in\N\cup\{0\}$. It is
defined as the smallest natural number $c$ such that for any
polyhedron $P=\{x\in\R^n:Ax\leq b\}$, with $A\in\R^{m\times n}$,
$b\in\R^m$, and $k=\LE(P)$, there exists a subset $I\subseteq[m]$,
with $|I|=c$, of the rows of~$A$ fulfilling that the polyhedron
$P_I=\{x\in\R^n:A_Ix\leq b_I\}$ contains the same set of lattice
points as~$P$. Here $A_I\in\R^{|I|\times n}$ and $b_I\in\R^{|I|}$
consist of only those rows of~$A$ and~$b$, respectively, that are
indexed by~$I$.

Aliev, Bassett, De Loera, and
Loveaux~\cite[Thm.~1]{alievbassetdeloeralouveaux2015aquantitative}
proved that $c(n,k)$ is finite, and in particular,
\begin{align}
c(n,k)\leq 2(2^{n-1}-1)\lceil 2(k+1)/3\rceil+2,\quad\textrm{for
all }n\in\N,k\in\N\cup\{0\}.\label{eqn_bound_facets}
\end{align}

Now, we are well-prepared to prove the upper bound on~$\LE(K)$ for strictly convex bodies~$K$.

\begin{proof}[Proof of \cref{thm_main_theorem_sc}]
Consider $S=\Z^n\setminus\inter K$. It is known that there exists a
maximal $S$-free polyhedron $L$ with $K\subseteq L$
(see~\cite[p.~1614]{averkov2013Onmaximal}). Let $m$ be the number of
facets, that is, $(n\!-\!1)$-dimensional faces, of $L$. By
definition of~$f(S)$, we have $m\leq f(S)$, and due to
\eqref{eqn_f_h_ineq}, we get that $m\leq h(S)$. In view of
\cite[Prop.~1.2]{averkovweismantel2012transversal}, the property
\eqref{eqn_hellynumber} is equivalent to:
\begin{align*}
&\textrm{For every }A\in\R^{m\times n}\textrm{ and }b\in\R^m\textit{
either }P=\{x\in\R^n:Ax\leq b\}\textrm{ contains}\\
&\textrm{a point of }S\textit{ or }\textrm{there is a subset
}I\subseteq[m],\,|I|=h,\textrm{ such that}\\
&P_I=\{x\in\R^n:A_Ix\leq b_I\}\textrm{ does not contain a point of
}S.
\end{align*}
This means that for every such polyhedron $P$ with $\Z^n\cap
P=\Z^n\cap\inter K$ there is a subset $I\subseteq[m]$, $|I|=h(S)$,
such that $\Z^n\cap P_I=\Z^n\cap\inter K$. Therefore $h(S)\leq
c(n,\LE(\inter K))$ which together with~\eqref{eqn_bound_facets}
proves that
\[m\leq 2(2^{n-1}-1)\lceil 2(\LE(\inter K)+1)/3\rceil+2.\]
We observe that $S\cap K$ consists of $\LE(K)-\LE(\inter K)$ elements, and that every element of $S\cap K$ lies in some facet of $L$.
Since~$K$ is strictly convex and $S$-free, no two distinct elements of $S\cap K$ lie in the same facet of~$L$.
Therefore,~$L$ has at least $\LE(K)-\LE(\inter K)$ facets, from which we conclude that
\[\LE(K)-\LE(\inter K)\leq m\leq 2(2^{n-1}-1)\lceil 2(\LE(\inter K)+1)/3\rceil+2.\qedhere\]
\end{proof}

\begin{remark}\
\begin{enumerate}[i)]
 \item It is clear that the question whether the bound~\eqref{eqn_main_theorem_sc} is best possible is closely connected to the determination of the minimal value for $c(n,k)$.
 An extended discussion of equality cases for the inequality in~\eqref{eqn_bound_facets} can be found in~\cite{alievbassetdeloeralouveaux2015aquantitative}.
 Therein, the authors describe a polyhedron showing that $c(n,1)=2(2^{n-1}-1)$, which corresponds to the fact that Minkowski's bound~\eqref{eqn_first_mink_disc_sc} is optimal as described in the introduction.
 For $k=2$, we do not know whether~\eqref{eqn_main_theorem_sc} is sharp.
 The bound on $c(n,2)$ in~\eqref{eqn_bound_facets} is the same as on $c(n,1)$, and the authors of~\cite{alievbassetdeloeralouveaux2015aquantitative} indeed conjecture that this is tight.
 However, they point out that~\eqref{eqn_bound_facets} can be improved for any $k\geq3$, and hence~\eqref{eqn_main_theorem_sc} is never tight for $\LE(\inter K)\geq 3$.
 \item Combining the congruence argument in the proof of~\eqref{eqn_osymm_theorem_sc} and the difference set estimate in \cref{thm_number_vectors}, one may argue similarly as in \cref{sect_proof_disc_Mink} and prove that $\LE(K)\leq 2^{n+1}\LE(\inter K)/3+2^{n+2}$, for every strictly convex body $K\in\K^n_o$.
 In fact, this is roughly the same bound as in \cref{thm_main_theorem_sc} albeit this approach is limited to the $0$-symmetric case.
\end{enumerate}
\end{remark}

\medskip
\noindent{\bf Acknowledgment.} The authors thank the referee for many suggestions and comments that greatly improved the paper.
In particular, we are grateful for pointing us to the connection with Helly numbers of families of $S$-convex sets and to the proof of \cref{thm_main_theorem_sc}.

\bibliographystyle{amsplain}
\bibliography{mybib}

\end{document}